\theoremstyle{plain}
\newtheorem{theorem}[equation]{Theorem}
\newtheorem{proposition}[equation]{Proposition}
\newtheorem{lemma}[equation]{Lemma}
\newtheorem{definition}[equation]{Definition}
\theoremstyle{remark}
\numberwithin{equation}{section}
\newcommand{\wB}{\mathbf{B}_{\lambda}}
\newcommand{\Bmu}{\mathbf{B}_{\mu}}
\newcommand{\Ba}{\mathbf{B}_{\lambda_\alpha}}
\newcommand{\uD}{\mathbb{D}}
\begin{document}

\bibliographystyle{alpha}

\title[$L^p$ Regularity of Weighted Projections]{$L^p$ Regularity of Some Weighted Bergman Projections on the Unit Disc}
\author{Yunus E. Zeytuncu}
%\thanks{}
\subjclass[2010]{Primary: 30B10, 30C40}
\address{Department of Mathematics, Texas A\&M University, College Station, Texas 77843}
\email{zeytuncu@math.tamu.edu}
\date{}
\begin{abstract} 
We show that weighted Bergman projections, corresponding to weights of the form $M(z)(1-|z|^2)^{\alpha}$ where $\alpha>-1$ and $M(z)$ is a radially symmetric, strictly positive and at least $C^2$ function on $\overline{\uD}$, are $L^p$ regular.
\end{abstract}
\keywords{Weighted Bergman projection, Coefficient multipliers}
\maketitle

\section{Introduction}

Let $\mathbb{D}$ denote the unit disc in $\mathbb{C}^1$ and $dA(z)$ denote the standard Lebesgue measure on $\mathbb{C}^1$. Let $\lambda(r)$ be a strictly positive and continuous function on $[0,1)$. We consider $\lambda(r)$ as a radially symmetric weight on $\uD$ by setting $\lambda(z):=\lambda(|z|)$ and denote the space of square integrable functions with respect to the area element $\lambda(z)dA(z)$ by $L^2(\lambda)$. It is clear that $L^2(\lambda)$ is a Hilbert space with the inner product defined by
\begin{equation*}
\left<f,g\right>_{\lambda}=\int_{\mathbb{D}}f(z)\overline{g(z)}\lambda(z)dA(z)
\end{equation*}
and the norm defined by
\begin{equation*}
\left|\left|f\right|\right|_{\lambda}^2=\int_{\mathbb{D}}|f(z)|^2\lambda(z)dA(z).
\end{equation*}

The closed subspace of holomorphic functions in $L^2(\lambda)$ is denoted by $A^2(\lambda)$. The orthogonal projection operator between these two spaces is called \textit{the weighted Bergman projection} and denoted by $\wB$, i.e., 
\begin{equation*}
\wB: L^2(\lambda) \to A^2(\lambda).
\end{equation*}

The Riesz representation theorem indicates that $\wB$ is an integral operator. The kernel of this integral operator is called \textit{the weighted Bergman kernel} and denoted by $B_{\lambda}(z,w)$, i.e. for any $f\in L^2(\lambda)$
\begin{equation*}
\wB f(z)=\int_{\uD}B_{\lambda}(z,w)f(w)\lambda(w)dA(w).
\end{equation*}

The monomials $\{z^n\}_{n=0}^{\infty}$ form an orthogonal basis of $A^2(\lambda)$ and the weighted Bergman kernel is given by the following sum:
\begin{equation*}
B_{\lambda}(z,w)=\sum_{n=0}^{\infty}a_n(z\bar w)^n,
\text{ where }a_n = \frac{1}{\int_{\uD}|z|^{2n}\lambda(z)dA(z)}. 
\end{equation*}
The coefficients $a_n$ are called \textit{the Bergman coefficients} of the weight $\lambda$. 

For $1<p<\infty$, we use the standard notation $L^p(\lambda)$ and $A^p(\lambda)$ to denote the respective Banach spaces of $p$-integrable functions on $\uD$ and we use $||. ||_{p,\lambda}$ to denote the norm on these spaces.\\

Let us consider the weights defined by $\lambda_{\alpha}(r)=(1-r^2)^{\alpha}$ for $\alpha>-1$, where we set $z=re^{i\theta}$. The Bergman theory for this family of weights are well investigated and can be found in \cite{Hakan}. 

In particular, the Bergman coefficients of these weights are computed explicitly and the following explicit expression for the weighted kernel is obtained:
\begin{equation*}
B_{\lambda_\alpha}(z,w)=\frac{c_{\alpha}}{\left(1-z\overline{w}\right)^{2+\alpha}},
\end{equation*}
where $c_{\alpha}$ is a constant that only depends on $\alpha$.

Furthermore, this explicit expression for the kernel and Schur's lemma together prove the following theorem. 
\begin{theorem}\label{first}
For $\alpha>-1$, the weighted Bergman projection $\mathbf{B}_{\lambda_\alpha}$ is bounded from $L^p(\lambda_\alpha)$ to $A^p(\lambda_\alpha)$ for any $1<p<\infty$.
\end{theorem}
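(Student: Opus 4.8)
The plan is to run Schur's test on the positive integral operator obtained by putting absolute values inside the explicit kernel. Since $|\Ba f(z)|\le c_\alpha\int_{\uD}|1-z\bar w|^{-(2+\alpha)}|f(w)|(1-|w|^2)^{\alpha}\,dA(w)$, it suffices to show that the positive operator $T$ with kernel $K(z,w)=|1-z\bar w|^{-(2+\alpha)}$ is bounded on $L^p$ of the measure $d\mu_\alpha(w)=(1-|w|^2)^{\alpha}\,dA(w)$. Boundedness of $T$ gives $\Ba\colon L^p(\lambda_\alpha)\to L^p(\lambda_\alpha)$, and since $\Ba f$ is holomorphic the image automatically lies in $A^p(\lambda_\alpha)$.

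I would then invoke Schur's lemma in its weighted form: if $K\ge 0$ and there is a positive measurable $h$ and a constant $C$ with $\int_{\uD}K(z,w)\,h(w)^{p'}\,d\mu_\alpha(w)\le C\,h(z)^{p'}$ and $\int_{\uD}K(z,w)\,h(z)^{p}\,d\mu_\alpha(z)\le C\,h(w)^{p}$ (where $1/p+1/p'=1$), then $T$ is bounded on $L^p(d\mu_\alpha)$ with norm at most $C$. The natural test function is $h(z)=(1-|z|^2)^{-t}$ for a parameter $t>0$ to be chosen, which turns both conditions into integrals of the classical Forelli--Rudin type $\int_{\uD}(1-|w|^2)^{\beta}|1-z\bar w|^{-c}\,dA(w)$.

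The key computational ingredient is the standard estimate (available in \cite{Hakan}): for $\beta>-1$ and $c-\beta-2>0$ one has $\int_{\uD}(1-|w|^2)^{\beta}|1-z\bar w|^{-c}\,dA(w)\asymp(1-|z|^2)^{\beta+2-c}$, uniformly on $\uD$. For the first Schur inequality we take $\beta=\alpha-tp'$ and $c=2+\alpha$, so $\beta+2-c=-tp'$ and the estimate yields exactly $h(z)^{p'}$ up to a constant, provided $t>0$ (so that $c-\beta-2=tp'>0$) and $\beta>-1$, i.e.\ $tp'<\alpha+1$. Symmetrically, the second inequality requires $tp<\alpha+1$. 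Because $\alpha>-1$, these two constraints are simultaneously satisfiable by any $t$ with $0<t<(\alpha+1)/\max(p,p')$ — for instance $t=(\alpha+1)/(pp')$ — and with such a $t$ Schur's lemma applies and the theorem follows.

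I expect the only genuine obstacle to be the Forelli--Rudin integral asymptotics, but these are classical and already contained in the cited reference; the remainder of the argument is just the bookkeeping of choosing the exponent $t$ and checking that $tp<\alpha+1$ and $tp'<\alpha+1$ can hold at once, which is possible precisely because $\alpha>-1$.
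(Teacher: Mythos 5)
Your proposal is correct and is essentially the argument the paper has in mind: the paper proves this theorem by pointing to the explicit kernel $c_\alpha(1-z\bar w)^{-(2+\alpha)}$ together with Schur's lemma, citing \cite{Hakan}, \cite{Zhubook}, and \cite{ForelliRudin}, which is exactly the Schur-test-with-Forelli--Rudin-estimates computation you carry out (and your bookkeeping with $h(z)=(1-|z|^2)^{-t}$, $tp<\alpha+1$, $tp'<\alpha+1$ is the standard and correct way to do it).
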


\begin{proof}
See page 12 of \cite{Hakan} and also \cite{Zhubook} and \cite{ForelliRudin}. \\
\end{proof}

The purpose of this note is to extend this theorem to more general weights in the following setup. Let $M(r)$ be a strictly positive and at least $C^2$ function on $[0,1]$. Without loss of generality, we assume that $M(1)=1$. Consider the radially symmetric weight defined by
$$\mu(z)=M(|z|)(1-|z|^2)^{\alpha}$$
on $\mathbb{D}$, for some $\alpha>-1$. By the general theory (see \cite{Duren04} and \cite{ForelliRudin}), there exists the weighted Bergman projection operator $\mathbf{B}_{\mu}:L^2(\mu) \to A^2(\mu)$, which is an integral operator with the weighted Bergman kernel $B_{\mu}(z,w)$, where
\begin{equation*}
B_{\mu}(z,w)=\sum_{n=0}^{\infty}b_n(z\bar w)^n,
\text{ and }b_n=\frac{1}{\int_{\uD}|z|^{2n}\mu(z)dA(z)}. 
\end{equation*}
But in this case, it is not easy (unless $M$ is a simple function) to compute the coefficients $b_n$ to get an explicit expression for the weighted kernel and therefore, Schur's lemma is not directly applicable in this case.\\

Nevertheless, we prove the analog of Theorem \ref{first} for $\Bmu$, without referring to an explicit expression for the kernel or Schur's lemma.

\begin{theorem}\label{second}
The weighted Bergman projection $\mathbf{B}_{\mu}$ is bounded from $L^p(\mu)$ to $A^p(\mu)$ for any $1<p<\infty$.
\end{theorem}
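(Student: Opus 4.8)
The plan is to factor $\Bmu$ through the already-understood projection $\Ba$ together with a coefficient multiplier that can be controlled by hand from the radial integrals defining $b_n$. Since the monomials are pairwise orthogonal in each weighted space and $\mu=M\lambda_\alpha$, one has $\langle f,z^n\rangle_\mu=\langle Mf,z^n\rangle_{\lambda_\alpha}$ for all $n$, and expanding the two kernels in their monomial series shows that for every $f\in L^p(\mu)$
\[
\Bmu f=\Lambda\!\bigl(\Ba(Mf)\bigr),\qquad \Lambda\Bigl(\sum_{n\ge 0}c_nz^n\Bigr):=\sum_{n\ge 0}m_n c_n z^n,\quad m_n:=\frac{b_n}{a_n}.
\]
Because $M$ is continuous and strictly positive on the compact set $[0,1]$, it is bounded above and below by positive constants; hence multiplication by $M$ is bounded from $L^p(\mu)$ to $L^p(\lambda_\alpha)$, the spaces $L^p(\mu),L^p(\lambda_\alpha)$ (and likewise $A^p(\mu),A^p(\lambda_\alpha)$) coincide with equivalent norms, and so by Theorem~\ref{first} it will be enough to show that $\Lambda$ is bounded on $A^p(\lambda_\alpha)$ for $1<p<\infty$.

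To analyse $\Lambda$ I set $W(s):=M(\sqrt s)$, a strictly positive function on $[0,1]$ with $W(1)=1$ that is as smooth as $M$ away from $s=0$ and in particular $C^2$ near $s=1$; substituting $s=|z|^2$ gives
\[
m_n=\frac{\int_0^1 s^n(1-s)^{\alpha}\,ds}{\int_0^1 s^n W(s)(1-s)^{\alpha}\,ds}.
\]
Since $0<\min W\le W\le\max W<\infty$ the sequence $(m_n)$ is bounded and bounded away from $0$, and since the normalized densities $s^n(1-s)^{\alpha}\,ds/\!\int_0^1 s^n(1-s)^{\alpha}\,ds$ concentrate at $s=1$ while $W(1)=1$ we have $m_n\to 1$. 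The crucial point — and the heart of the argument — is that $(m_n)$ has bounded variation, $\sum_n|m_{n+1}-m_n|<\infty$. To prove this I estimate the cross difference of numerator and denominator at consecutive indices; antisymmetrizing the resulting double integral in its two variables turns it into
\[
\tfrac12\iint_{[0,1]^2}(st)^n(s-t)^2\,\frac{W(s)-W(t)}{s-t}\,(1-s)^{\alpha}(1-t)^{\alpha}\,ds\,dt,
\]
and then the boundedness of $(W(s)-W(t))/(s-t)$ on $\{\max(s,t)\ge\tfrac12\}$ (together with $(st)^n\le 4^{-n}$ on the complementary square) and the elementary bound $(s-t)^2\le 2(1-s)^2+2(1-t)^2$ reduce everything to comparing $\int_0^1 s^n(1-s)^{\alpha+2}\,ds$ with $\int_0^1 s^n(1-s)^{\alpha}\,ds$ and give $|m_{n+1}-m_n|=O(n^{-2})$. (Equivalently, the second‑order Taylor expansion of $M$ at $|z|=1$ together with a Watson‑type expansion of the radial integrals yields $m_n=1+c_1/n+O(n^{-2})$ — this is the place where $M\in C^2$ is used — with the same consequence.)

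It remains to pass from ``bounded and of bounded variation'' to ``bounded coefficient multiplier on $A^p(\lambda_\alpha)$''. Here I will use that the Taylor partial sums $S_Ng=\sum_{n\le N}c_nz^n$ are uniformly bounded on $A^p(\lambda_\alpha)$ — equivalently, $\{z^n\}$ is a Schauder basis of $A^p(\lambda_\alpha)$ — which is standard in this setting (see \cite{Hakan},\cite{Zhubook}); then summation by parts gives, with $m_\infty=\lim_n m_n$,
\[
\Lambda g=m_\infty g-\sum_{n\ge 0}(m_{n+1}-m_n)\,S_ng
\]
with the series converging absolutely in $A^p(\lambda_\alpha)$, whence $\|\Lambda g\|_{p,\lambda_\alpha}\le\bigl(|m_\infty|+\sup_N\|S_N\|\cdot\sum_n|m_{n+1}-m_n|\bigr)\|g\|_{p,\lambda_\alpha}$. (Alternatively, the expansion $m_n=1+c_1/n+O(n^{-2})$ lets one write $m_n=\int_0^1 t^n\,d\tau(t)$ for a finite signed Borel measure $\tau$ on $[0,1]$, so that $\Lambda g(z)=\int_0^1 g(tz)\,d\tau(t)$ and $\|\Lambda g\|_{p,\lambda_\alpha}\le|\tau|([0,1])\,\|g\|_{p,\lambda_\alpha}$, since dilations are contractions on $A^p(\lambda_\alpha)$.) Combined with the factorization above this proves the theorem. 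The only real difficulty lies in the bounded‑variation estimate: with $M$ general there is no closed form for $b_n$, so the whole argument hinges on extracting the decay $m_{n+1}-m_n=O(n^{-2})$ from the smoothness of $M$ near the boundary circle; the factorization and the passage from bounded variation to multiplier are soft, the latter relying only on Theorem~\ref{first}.
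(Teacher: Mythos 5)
Your proposal is correct and follows the same skeleton as the paper: factor $\Bmu f=\mathcal{R}\bigl[\Ba(Mf)\bigr]$ through the coefficient multiplier $m_n=b_n/a_n$, prove that $(m_n)$ has bounded variation with $|m_{n+1}-m_n|=O(n^{-2})$, and convert bounded variation into a bounded multiplier via summation by parts against uniformly bounded Taylor partial sums. Where you genuinely diverge is in the two technical ingredients. First, for the bounded-variation lemma the paper rewrites the cross difference $B(n)$ and integrates by parts twice, which is exactly where $M\in C^2$ and $M(1)=1$ enter (no boundary terms); you instead antisymmetrize the cross difference into the double integral $\tfrac12\iint (st)^n(s-t)\bigl(W(t)-W(s)\bigr)(1-s)^{\alpha}(1-t)^{\alpha}\,ds\,dt$ and use only a Lipschitz bound on the difference quotient near $s=t=1$ together with $(s-t)^2\le 2(1-s)^2+2(1-t)^2$ and the Beta-function asymptotics $\int_0^1 s^n(1-s)^{\beta}ds\asymp n^{-\beta-1}$. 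This is a cleaner computation and, notably, needs only $M\in C^1$ (indeed Lipschitz) and not $M(1)=1$, so it proves a slightly stronger statement than the paper's lemma. Second, for the functional-analytic step the paper proves from scratch (its Proposition 2.5, via the Szeg\"o projection and M.~Riesz) that Taylor series converge in $A^p(\lambda)$ for \emph{every} integrable radial weight, because it applies the multiplier on $A^p(\mu)$; you instead use $c\le M\le C$ to identify $A^p(\mu)$ with $A^p(\lambda_\alpha)$ up to equivalent norms and invoke the uniform boundedness of $S_N$ on $A^p(\lambda_\alpha)$, which is the standard special case. That citation should be made precise (or the paper's short Szeg\"o-projection argument reproduced), but it is not a gap. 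Two small caveats: your parenthetical alternative, writing $m_n=\int_0^1 t^n\,d\tau(t)$ for a finite signed measure, does not follow merely from $m_n=1+c_1/n+O(n^{-2})$ (a Hausdorff moment representation requires more than a decay rate), so you should rely only on the summation-by-parts route; and in the factorization identity the interchange of sum and integral is most cleanly justified first for $f$ in a dense class (e.g.\ $L^2\cap L^p$) and then extended, which is essentially how the paper handles it via truncation and norm convergence of Taylor polynomials.
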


The proof is in two steps; first relating $\Bmu$ to $\Ba$ by a coefficient multiplier operator and then showing that this coefficient multiplier operator is bounded.\\

For the rest of the note, we denote the boundary of $\mathbb{D}$ by $b\mathbb{D}$ and we write $A\lesssim B$ to mean $A\leq cB$ for some constant $c$ that is clear in context. We also use the Szeg\"o projection $\mathbf{T}:L^2(b\mathbb{D}, d\theta)\to H^2$, where $d\theta$ is the arc length on the unit circle and $H^p$ is the Hardy space of order $p$. We refer to \cite{Duren04} for definitions and standard facts about the Szeg\"o projection and Hardy spaces.\\

This article is a part of my PhD dissertation at The Ohio State University. I thank J.D. McNeal, my advisor, for introducing me to this field and helping me with various points. I also thank the anonymous referee for helpful comments.\\

\section{Coefficient Multipliers and Norm Convergence}

In this section, before giving the details of the proof of Theorem \ref{second}, we recall a few facts about coefficient multipliers. See \cite{Vukotic99} and \cite{Duren04} for general account.\\

Let $X$ be a Banach space of holomorphic functions on $\mathbb{D}$. Any $f\in X$ has Taylor series expansion $$f(z)=\sum_{n=0}^{\infty}f_nz^n.$$
\begin{definition}
A sequence of complex numbers $\{t_n\}$ is called a coefficient multiplier from $X$ to $X$ and denoted by
$\{t_n\} \in (X, X)$ if for any function $f\in X$, 
\begin{equation*}
t(f)(z):=\sum_{n=0}^{\infty}t_nf_nz^n~\text{ is also in }X.
\end{equation*}
\end{definition}

It is a fairly general question to characterize the coefficient multipliers on an arbitrary Banach space $X$ and there is no full answer to this question.
\begin{definition}\label{partialsum}
For a holomorphic function $f$ on $\mathbb{D}$ and $N\in\mathbb{N}$, let $S_Nf$ denote 
the Taylor polynomial of $f$ of degree $N$, i.e., $S_Nf(z)=\sum_{n=0}^{N}f_nz^n.$
\end{definition}

If $X$ has the property that for any $f\in X$ the sequence of Taylor polynomials $\left\{S_Nf\right\}$ converges to $f$, then a sufficient condition for coefficent multipliers can be formulated as follows. 

\begin{proposition}\label{sufficient}
Let $(X,||.||)$ be a Banach space of holomorphic functions on $\mathbb{D}$ such that for every $f\in X$ the sequence $\left\{S_Nf\right\}$ of Taylor polynomials converges to $f$ in the norm of $X$. Then any sequence of \textbf{bounded variation} is a coefficient multiplier from $X$ to $X$.
\end{proposition}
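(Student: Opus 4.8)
The plan is to use a summation-by-parts (Abel summation) argument together with the uniform boundedness of the partial-sum operators $S_N$, which is the hidden consequence of the hypothesis that $\{S_Nf\}$ converges in norm for every $f\in X$. First I would invoke the Banach--Steinhaus theorem: since each $S_N:X\to X$ is a bounded linear operator (it has finite-dimensional range, and evaluation of Taylor coefficients is continuous on a Banach space of holomorphic functions by the closed graph theorem, so $S_N$ is bounded) and $S_Nf\to f$ for each fixed $f$, the family $\{S_N\}$ is uniformly bounded, say $\|S_N\|\le C$ for all $N$. This uniform bound $C$ is the engine of the whole argument.

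Next, fix a sequence $\{t_n\}$ of bounded variation, so that $\sum_{n=0}^{\infty}|t_{n+1}-t_n|<\infty$ and consequently $t_n\to t_\infty$ for some finite limit. For $f\in X$ with partial sums $S_Nf$, I would write the multiplied partial sum $\sum_{n=0}^{N}t_nf_nz^n$ via Abel summation in terms of the operators $S_n$: one obtains an identity of the shape
\begin{equation*}
\sum_{n=0}^{N}t_nf_nz^n=\sum_{n=0}^{N-1}(t_n-t_{n+1})S_nf(z)+t_N\,S_Nf(z).
\end{equation*}
Taking norms and using $\|S_nf\|\le C\|f\|$ for every $n$, the right-hand side is bounded by $\left(\sum_{n=0}^{N-1}|t_n-t_{n+1}|+|t_N|\right)C\|f\|$, which is bounded uniformly in $N$ by $\bigl(\mathrm{Var}(\{t_n\})+\sup_n|t_n|\bigr)C\|f\|$, and note $\sup_n|t_n|\le|t_0|+\mathrm{Var}(\{t_n\})<\infty$. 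So the sequence of partial sums of $t(f)$ is norm-bounded.

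To upgrade boundedness of the partial sums to actual convergence (and hence membership of $t(f)$ in $X$), I would show $\{\sum_{n=0}^{N}t_nf_nz^n\}_N$ is Cauchy in $X$: for $M<N$ the difference is, after the same Abel manipulation, $\sum_{n=M}^{N-1}(t_n-t_{n+1})S_nf + t_N S_Nf - t_M S_Mf$ up to reindexing, and since $\{S_Nf\}$ itself converges (hence is Cauchy) and the tail $\sum_{n\ge M}|t_n-t_{n+1}|\to 0$, this difference goes to $0$ as $M,N\to\infty$. (Here one writes $t_N S_N f - t_M S_M f = t_\infty(S_Nf - S_Mf) + (t_N - t_\infty)S_Nf - (t_M-t_\infty)S_Mf$ and uses $t_N,t_M\to t_\infty$ with $\|S_Nf\|$ bounded.) By completeness of $X$ the limit exists in $X$; since coefficientwise it agrees with $\sum t_nf_nz^n$, the limit \emph{is} $t(f)$, so $t(f)\in X$ and $\{t_n\}\in(X,X)$.

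The main obstacle, and the only genuinely nontrivial point, is extracting the uniform bound $\sup_N\|S_N\|<\infty$ from the pointwise convergence hypothesis; everything after that is routine Abel summation plus a Cauchy-sequence estimate. One small technical care is needed to justify that each $S_N$ is a bounded operator on $X$ — this rests on the standard fact that on a Banach space of holomorphic functions on $\mathbb{D}$ the Taylor-coefficient functionals $f\mapsto f_n$ are continuous, which follows from the closed graph theorem since norm convergence in $X$ forces (at least) uniform-on-compacta convergence, hence coefficientwise convergence.
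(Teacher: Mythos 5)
Your argument is correct and reaches the stated conclusion, but it runs along a genuinely different track from the paper's. You first extract the uniform bound $\sup_N\Vert S_N\Vert\le C$ via Banach--Steinhaus and then Abel-sum against the partial sums $S_nf$, finishing with a Cauchy estimate in which the tail of the variation and the Cauchyness of $\{S_Nf\}$ do the work. The paper instead performs the summation by parts directly on the tails of the fixed function: writing $f_nz^n=(f-S_{n-1}f)-(f-S_nf)$, the tail $\sum_{n=k}^\infty t_nf_nz^n$ is rewritten as $\sum_{n=k}^\infty(t_{n+1}-t_n)(f-S_nf)+t_k(f-S_{k-1}f)$, and since $\Vert f-S_nf\Vert<\epsilon$ for $n$ large while the variation is at most $K$, this tail has norm at most $K\epsilon$; completeness of $X$ then yields $t(f)\in X$. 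The paper's route uses only the convergence hypothesis for the single $f$ at hand---no operator norms, no uniform boundedness principle, and no boundedness of the individual operators $S_N$. What your route buys is a quantitative statement: it shows at once that the multiplier operator has norm at most $C\bigl(\operatorname{Var}(\{t_n\})+\sup_n|t_n|\bigr)$, whereas the paper gets boundedness of $\mathcal{R}$ only afterwards, via the closed graph theorem in Section 3. (The final identification of the norm limit of the partial sums with the formal series $\sum t_nf_nz^n$ is implicit in both proofs.)

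The one soft spot in your write-up is the justification that each $S_N$ is bounded. As stated, the closed-graph argument is circular: to close the graph of $f\mapsto f_n$ you must already know that norm convergence in $X$ controls Taylor coefficients, i.e.\ that $X$ embeds continuously into the holomorphic functions with the compact-open topology, and this is not among the proposition's stated hypotheses---it is an additional (though customary) assumption on a ``Banach space of holomorphic functions.'' It does hold for the spaces $A^p(\lambda)$ to which the proposition is applied, so nothing breaks in this paper; but note that the paper's tail-based proof never needs it, which is exactly why it can operate with the bare hypothesis that $S_Nf\to f$ in norm for every $f\in X$.
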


\begin{definition} A sequence of complex numbers $\{t_n\}$ is said to be of bounded variation if $|t_0|+\sum_{n=1}^{\infty}|t_n-t_{n-1}|~\text{ is finite.}$\end{definition}

Proposition \ref{sufficient} appears in \cite[Proposition 3.7]{Vukotic99}. It follows from summation by parts and we repeat its proof for completeness.
\begin{proof}%[Proof of Proposition \ref{sufficient}]
Since the Taylor polynomials converge, for any given $f\in X$ and $\epsilon>0$ there exists an $N$ such that for any $k>N$, $$\left|\left|\sum_{n=k}^{\infty}f_nz^n\right|\right|<\epsilon.$$ Let $\{t_n\}$ be the sequence of bounded variation and $|t_0|+\sum_{n=1}^{\infty}|t_n-t_{n-1}|\leq K$. Summation by parts and bounded variation hypothesis give
\begin{align*}
\left|\left|\sum_{n=k}^{\infty}t_nf_nz^n\right|\right|&=\left|\left|\sum_{n=k}^{\infty}(t_{n+1}-t_n)\sum_{j=n+1}^{\infty}f_jz^j+t_k\sum_{n=k}^{\infty}f_nz^n\right|\right|\\&\leq \left[|t_k|+\sum_{n=k}^{\infty}|t_{n+1}-t_n|\right]\epsilon \\ &\leq K\epsilon
\end{align*}
This shows that $t(f)(z)=\sum_{n=0}^{\infty}t_nf_nz^n$ is in $X$ and finishes the proof.
\end{proof}

\vskip 1cm
In order to use this proposition in the proof of Theorem \ref{second}, we have to check whether Taylor polynomials converge in $A^{p}(\mu)$. This turns out to be true even in a more general form.
\begin{proposition}\label{Taylorconvergence} 
For $1<p<\infty$ and any integrable radial weight $ \lambda(r)$, the Taylor series of every function in $A^p(\lambda)$ converges in norm.
\end{proposition}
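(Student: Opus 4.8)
The plan is to reduce norm convergence of Taylor series to the uniform boundedness of the partial-sum operators $S_N$ on $A^p(\lambda)$, and then to obtain that bound by transferring it to the classical (unweighted Hardy space) setting where it is known. First I would note that, by the standard Banach–Steinhaus argument, it suffices to prove that $\sup_N \|S_N\|_{A^p(\lambda)\to A^p(\lambda)} < \infty$: once the operators are uniformly bounded, convergence $S_N f \to f$ holds on the dense subspace of polynomials and hence on all of $A^p(\lambda)$. So the real content is a uniform bound for the partial-sum (Fourier projection) operators.

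To get that bound, I would pass to boundary-type behavior via circle averages. Writing $f_s(e^{i\theta}) = f(s e^{i\theta})$ for $0<s<1$, one has $S_N f(z) = \frac{1}{2\pi}\int_0^{2\pi} D_N(\theta-\phi)\, f(s e^{i\phi})\, d\phi$ evaluated appropriately, where $D_N$ is the Dirichlet kernel; equivalently, on each circle of radius $s$, $S_N$ acts as the classical $N$-th Fourier partial sum operator. The classical M.\ Riesz theorem gives $\|S_N g\|_{L^p(bD,d\theta)} \le C_p \|g\|_{L^p(bD,d\theta)}$ with $C_p$ \emph{independent of $N$}, for $1<p<\infty$. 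Applying this on each circle $|z|=s$ to $g = f_s$ yields $\int_0^{2\pi} |S_N f(s e^{i\theta})|^p\, d\theta \le C_p^p \int_0^{2\pi} |f(s e^{i\theta})|^p\, d\theta$ for every $s \in (0,1)$ (note $S_N$ commutes with dilations, so $S_N$ applied to $f$ and then restricted to $|z|=s$ is the same as restricting first and then taking the Fourier partial sum). Now multiply by $\lambda(s)\, s\, ds$ and integrate in $s$ over $(0,1)$: since $\lambda$ is radial and integrable, Fubini gives $\|S_N f\|_{p,\lambda}^p = \int_0^1 \big(\int_0^{2\pi} |S_N f(s e^{i\theta})|^p d\theta\big)\lambda(s)\, s\, ds \le C_p^p \int_0^1 \big(\int_0^{2\pi} |f(s e^{i\theta})|^p d\theta\big)\lambda(s)\, s\, ds = C_p^p \|f\|_{p,\lambda}^p$. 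This is exactly the desired uniform bound, with constant $C_p$ depending only on $p$ (and in particular not on $\lambda$ or $N$).

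I expect the main technical point — really the only thing requiring care — is the justification that $S_N f$, which is a priori only defined as a polynomial via Taylor coefficients of the holomorphic $f$, genuinely restricts on each circle to the classical Fourier partial sum of the continuous function $f_s$, and that all the integrals above are finite so Fubini applies. This is routine: $f \in A^p(\lambda) \subset A^1_{\mathrm{loc}}$ is holomorphic, hence smooth on each circle $|z|=s<1$, its Fourier series in $\theta$ on that circle is precisely $\sum_n f_n s^n e^{in\theta}$, and the $N$-th Fourier partial sum of $f_s$ is $\sum_{n\le N} f_n s^n e^{in\theta} = (S_N f)(s e^{i\theta})$; the radial integrability of $\lambda$ and $f \in L^p(\lambda)$ make the iterated integral finite. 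With that in hand, the Banach–Steinhaus step closes the argument. One should remark that the same proof in fact shows $\|S_N\|_{A^p(\lambda) \to A^p(\lambda)} \le C_p$ with $C_p$ the M.\ Riesz constant, uniformly over all integrable radial weights $\lambda$, which is the statement as phrased.
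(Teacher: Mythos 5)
Your proposal is correct and is essentially the paper's own argument: you obtain $\sup_N\|S_N\|_{A^p(\lambda)\to A^p(\lambda)}<\infty$ from the M.~Riesz/Szeg\"o partial-sum estimate applied circle by circle and integrated against $r\lambda(r)\,dr$ (the paper's Step Two), and you then conclude norm convergence from uniform boundedness plus density of polynomials (the paper's Step Three). The one ingredient you take as given but which the paper actually proves (its Step One) is the density of the polynomials in $A^p(\lambda)$ for a general integrable radial weight; this is routine but not free --- it is exactly where the radiality of $\lambda$ enters, via the dilation argument $\|f-f_\rho\|_{p,\lambda}\to 0$ using the monotonicity of the integral means $M_p(r,f)$ and dominated convergence --- so a complete write-up should include it.
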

\noindent In particular, the claim is true for $A^{p}(\lambda_\alpha)$ and $A^{p}(\mu)$. The statement for $A^p(\lambda_\alpha)$ is in \cite{Zhu91}. The general case is obtained by just imitating the proof in \cite{Zhu91}.

\begin{proof}% \textit{Proposition \ref{Taylorconvergence}}\\
This is done in three steps.

\noindent \textit{Step One.} The holomorphic polynomials are dense in $A^p(\lambda)$.

\noindent For any $f \in A^p(\lambda)$ and for any $0< \rho<1$, define $f_{\rho}(z)=f(\rho z)$. Each $f_{\rho}$ is holomorphic in a larger disc and the Taylor polynomials of each $f_{\rho}$ converges uniformly on $\mathbb{D}$ and hence in $A^p(\lambda)$. Therefore it is enough to show that 
$$\lim_{\rho\to1^-}||f-f_{\rho}||_{p,\lambda}=0.$$
For any holomorphic $f$, the averages $$M_p^p(r,f)=\frac{1}{2\pi}\int_{0}^{2\pi}|f(re^{i\theta})|^pd\theta$$ are well defined and non-decreasing functions of $r$ (see \cite[page 26]{Duren04}). Moreover $$M_p^p(r,f_{\rho})=M_p^p(\rho r,f)\leq M_p^p(r,f).$$ Since $f\in A^p(\lambda)$ and 
\begin{align*}
||f||_{p,\lambda}^p=\int_0^1r\lambda(r)M_p^p(r,f)dr.
\end{align*}
$M_p^p(r,f)$ is integrable with respect to the weight $r\lambda(r)dr$.

\noindent On the other hand, $f_{\rho}\to f$  pointwise on $\mathbb{D}$ as $\rho \to 1^-$ so by the Lebesgue dominated convergence theorem $\lim_{\rho \to 1^-}M_p^p(r,f-f_{\rho})=0$. We also have 
$$M_p^p(r,f-f_{\rho})\leq 2^p \left(M_p^p(r,f)+M_p^p(r,f_{\rho}\right)\leq 2^{p+1}M_p^p(r,f).$$

\noindent Therefore again the Lebesgue dominated convergence theorem implies
\begin{align*}
\lim_{\rho\to1^-}||f-f_{\rho}||_{p,\lambda}^p&=\lim_{\rho\to1^-}\int_0^1r\lambda(r)M_p^p(r,f-f_{\rho})dr\\
&=\int_0^1r\lambda(r)\lim_{\rho\to1^-}M_p^p(r,f-f_{\rho})dr\\
&=0.
\end{align*}
\noindent This finishes the first step.\\

\noindent \textit{Step Two.} We show that the operator norms of $S_N$'s (defined in Definition \ref{partialsum}) are uniformly bounded. For this we need a well-known result about the Szeg\"o projection. Let $\mathbf{T}:L^2(b\mathbb{D}, d\theta)\to H^2$ denote the Szeg\"o projection. By using the fact that $\mathbf{T}$ is also bounded from $L^p(b\mathbb{D}, d\theta)$ to $H^p$ for any $1<p<\infty$, one can prove (see \cite[page 27]{Duren04}) that  there exists $C>0$ , independent of $N$ and $h$, such that
\begin{equation}
\int_{0}^{2\pi}|S_Nh(e^{i\theta})|^pd\theta\leq C\int_{0}^{2\pi}|h(e^{i\theta})|^pd\theta
\end{equation}
for any $h\in H^p$. The proof is only to note that $\overline{S_Nf(e^{i\theta})}=e^{-iN\theta}T\left(e^{iN\theta}\overline{f(e^{i\theta})}\right)$ which is clear for $f$ a polynomial, and follows in general since polynomials are dense in $H^p$.

Now we calculate the operator norms of $S_N$'s. For given $f\in  A^p(\lambda)$,

\begin{align*}
||S_Nf||_{p,\lambda}^p&=\int_{\mathbb{D}}|S_Nf(z)|^p\lambda(z)dA(z)\\
&=\int_0^1r\lambda(r)dr\int_{0}^{2\pi}|S_Nf(re^{i\theta})|^pd\theta\\
&\leq C \int_0^1r\lambda(r)dr\int_{0}^{2\pi}|f(re^{i\theta})|^pd\theta~ \text{ since } f_r\in H^p\\
&=C\int_{\mathbb{D}}|f(z)|^p\lambda(z)dA(z)\\
&=C ||f||_{p,\lambda}^p.
\end{align*}
\noindent This implies that $\sup_{N}||S_N||_{op}\leq C$ and finishes the second step.\\

\noindent \textit{Step Three.} Next, we show that $\lim_{N\to \infty}||S_Nf-f||_{p,\lambda}=0$ for any $f\in A^p(\lambda)$. 

\noindent Given $f$ and $\epsilon>0$, by the first step there exists a polynomial $Q$ such that $||Q-f||_{p,\lambda}^p<\epsilon$. Then
\begin{align*}
||S_Nf-f||_{p,\lambda}^p&\leq ||S_Nf-S_NQ||_{p,\lambda}^p+||S_NQ-Q||_{p,\lambda}^p+||Q-f||_{p,\lambda}^p\\
&\leq (C+1)\epsilon + ||S_NQ-Q||_{p,\lambda}^p.
\end{align*}
Note that $S_NQ=Q$ for large enough $N$ and therefore for sufficiently large $N$,
$$||S_Nf-f||_{p,\lambda}^p\leq (C+1)\epsilon.$$
Since this is true for any $\epsilon>0$ we get  $\lim_{N\to \infty}||S_Nf-f||_{p,\lambda}=0$. This finishes the last step and the proof of the proposition.\\
\end{proof}

\section{Proof of Theorem \ref{second}}

In this section, we prove Theorem \ref{second} by using Propositions \ref{sufficient} and \ref{Taylorconvergence}. Recall that $a_n$'s are the Bergman coefficients of $(1-|z|^2)^{\alpha}$ and $b_n$'s are the Bergman coefficients of $\mu$. Let $\mathcal{R}$ denote the coefficient multiplier operator for the sequence $\left\{ \frac{b_n}{a_n} \right\}$. The following identity relates the two Bergman projections:
\begin{equation}\label{relation}
\mathbf{B}_{\mu}f(z)=\mathcal{R}\left[\mathbf{B}_{\lambda_\alpha}\left(fM\right)\right](z).
\end{equation}

\noindent Indeed, for any $f \in L^2(\mu)$,
\begin{align*}
\mathbf{B}_{\mu}f(z)=&\int_{\mathbb{D}}\sum_{n=0}^{\infty}b_n(z\bar{w})^nf(w)\mu(w)dA(w)=\sum_{n=0}^{\infty}b_nz^n\int_{\mathbb{D}}\bar{w}^nf(w)\mu(w)dA(w)&\\
=&\sum_{n=0}^{\infty}a_nz^n\frac{b_n}{a_n}\int_{\mathbb{D}}\bar{w}^nf(w)\mu(w)dA(w)&\\
=&\mathcal{R}\left[\sum_{n=0}^{\infty}a_nz^n\int_{\mathbb{D}}\bar{w}^nf(w)\mu(w)dA(w)\right]&\\
=&\mathcal{R}\left[\mathbf{B}_{\lambda_\alpha}\left(fM\right)\right](z).&\\
\end{align*}
Here we change the order of integration and summation but this doesn't cause any problems. We can truncate the summation, which is equivalent to looking at the Taylor polynomials of $\mathbf{B}_{\mu}f$ and $\mathbf{B}_{\lambda_\alpha}\left(fM\right)$, and take limit by using Proposition \ref{Taylorconvergence}. Now, it suffices to prove that the multiplier operator $\mathcal{R}$ is bounded from $A^{p}(\mu)$ to $A^{p}(\mu)$ (actually, we have to show that $\mathcal{R}$ is bounded from $A^{p}(\lambda_{\alpha})$ to $A^{p}(\mu)$ but since $M$ is of class $C^2$ and thus bounded; the inclusion map $i:A^{p}(\lambda_{\alpha}) \to A^{p}(\mu)$ is bounded). By the closed graph theorem it is enough to show that $\mathcal{R}\left(f\right) \in A^{p}(\mu)$ for any $f\in A^{p}(\mu)$. Moreover, Proposition \ref{sufficient} implies that it is enough to show that the sequence $\left\{\frac{b_n}{a_n}\right\}$ is of bounded variation.

It is immediate that the sequence $\left\{\frac{b_n}{a_n}\right\}$ is bounded from below and above. Moreover, a direct computation gives that $$\lim_{n\to \infty}\frac{b_n}{a_n}=\lim_{n\to \infty}\frac{\int_0^1r^{2n+1}(1-r^2)^{\alpha}dr}{\int_0^1r^{2n+1}\mu(r)dr}=M(1)^{-1}.$$ We quantify this computation to get that the sequence $\left\{\frac{b_n}{a_n}\right\}$ is indeed of bounded variation.\\

\begin{lemma}\label{multiplierestimate}
$\left|\frac{b_n}{a_n}-\frac{b_{n-1}}{a_{n-1}}\right|\lesssim\frac{1}{n^2}$, i.e., the sequence $\left\{\frac{b_n}{a_n}\right\}$ is of bounded variation and therefore $\mathcal{R}$ is bounded from $A^{p}(\mu)$ to $A^{p}(\mu)$.
\end{lemma}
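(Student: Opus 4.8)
The plan is to work directly with the integral expressions for the Bergman coefficients. Write $a_n^{-1} = 2\pi\int_0^1 r^{2n+1}(1-r^2)^\alpha\,dr$ and $b_n^{-1} = 2\pi\int_0^1 r^{2n+1}M(r)(1-r^2)^\alpha\,dr$ (the factor $2\pi$ cancels in the ratio, so I will drop it). Then
\begin{equation*}
\frac{b_n}{a_n} = \frac{\int_0^1 r^{2n+1}(1-r^2)^\alpha\,dr}{\int_0^1 r^{2n+1}M(r)(1-r^2)^\alpha\,dr},
\end{equation*}
and I would substitute $t=r^2$ to put everything in the cleaner form $\int_0^1 t^{n}(1-t)^\alpha\,dt$ versus $\int_0^1 t^n M(\sqrt t)(1-t)^\alpha\,dt$, with $\widetilde M(t):=M(\sqrt t)$ still $C^2$ and bounded above and below on $[0,1]$. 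Denote $I_n=\int_0^1 t^n(1-t)^\alpha\,dt$ (a Beta function, $\sim c\,n^{-\alpha-1}$) and $J_n=\int_0^1 t^n\widetilde M(t)(1-t)^\alpha\,dt$, so $b_n/a_n = I_n/J_n$.

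Next I would compute the difference $\frac{b_n}{a_n}-\frac{b_{n-1}}{a_{n-1}} = \frac{I_n}{J_n}-\frac{I_{n-1}}{J_{n-1}} = \frac{I_nJ_{n-1}-I_{n-1}J_n}{J_nJ_{n-1}}$. Since $J_n,J_{n-1}\asymp n^{-\alpha-1}$, the denominator is $\asymp n^{-2\alpha-2}$, so it suffices to show the numerator $I_nJ_{n-1}-I_{n-1}J_n$ is $O(n^{-2\alpha-4})$. The key idea is that both $I_n$ and $J_n$ have two-term asymptotic expansions: since the weight mass concentrates near $t=1$, Watson's lemma (or a direct Taylor expansion of $\widetilde M$ at $t=1$) gives $J_n = \widetilde M(1)I_n + \widetilde M'(1)K_n + O(\text{third-order})$, where $K_n=\int_0^1 t^n(t-1)(1-t)^\alpha\,dt = -I_{n}^{(\alpha+1)}$ is itself a Beta integral with a shifted exponent, of order $n^{-\alpha-2}$; the $C^2$ hypothesis controls the remainder at order $n^{-\alpha-3}$. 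Using $\widetilde M(1)=M(1)=1$, the leading terms $\widetilde M(1)I_nI_{n-1}$ cancel in the numerator, leaving a combination of $I_nK_{n-1}-I_{n-1}K_n$ (whose size I need to check beats $n^{-2\alpha-4}$, using that $K_n/I_n$ and $K_{n-1}/I_{n-1}$ agree to leading order — another cancellation) plus genuinely lower-order terms from the remainders, which are $O(n^{-\alpha-1}\cdot n^{-\alpha-3})=O(n^{-2\alpha-4})$ as required. Dividing by the denominator $\asymp n^{-2\alpha-2}$ then yields the claimed $O(n^{-2})$ bound, and boundedness of $\mathcal R$ follows from Propositions \ref{sufficient} and \ref{Taylorconvergence}.

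An alternative, and perhaps cleaner, route avoids tracking the $\alpha$-dependent powers: normalize by the Beta function explicitly. Since $I_n = B(n+1,\alpha+1) = \frac{\Gamma(n+1)\Gamma(\alpha+1)}{\Gamma(n+\alpha+2)}$, one has the exact ratio $I_n/I_{n-1} = \frac{n}{n+\alpha+1} = 1-\frac{\alpha+1}{n}+O(n^{-2})$. Writing $c_n := J_n/I_n = \int_0^1 \widetilde M(t)\,d\nu_n(t)$ where $d\nu_n(t) = I_n^{-1}t^n(1-t)^\alpha\,dt$ is a probability measure on $[0,1]$ concentrating at $t=1$, I have $b_n/a_n = 1/c_n$, so it is enough to show $|c_n-c_{n-1}|\lesssim n^{-2}$ (since $c_n$ is bounded away from $0$). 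Expanding $\widetilde M(t)=1+\widetilde M'(1)(t-1)+\tfrac12\widetilde M''(\xi_t)(t-1)^2$ gives $c_n = 1 + \widetilde M'(1)m_n^{(1)} + O(m_n^{(2)})$ where $m_n^{(k)}=\int_0^1(t-1)^k d\nu_n$ are the moments of $(t-1)$; one checks $m_n^{(1)} = -\frac{\alpha+1}{n+\alpha+1}$ exactly (from $I_n/I_{n-1}$-type identities) so $m_n^{(1)}-m_{n-1}^{(1)}=O(n^{-2})$, and $m_n^{(2)}=O(n^{-2})$ with the remainder term contributing $O(n^{-2})$ to $c_n-c_{n-1}$ as well. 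The main obstacle in either approach is purely bookkeeping: making the cancellation of leading-order terms explicit and bounding the $C^2$-remainder uniformly; no single step is deep, but one must be careful that the first-order terms really do cancel to the order claimed rather than leaving an $O(n^{-1})$ residue, which is exactly where the hypothesis $M(1)=1$ and the $C^2$ regularity of $M$ are used.
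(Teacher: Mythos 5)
Your proposal is correct, but it follows a genuinely different route from the paper. The paper never expands $M$ asymptotically: it rewrites the numerator $B(n)=\frac{b_n}{a_n}-\frac{b_{n-1}}{a_{n-1}}$ (after clearing denominators) so that every term carries a factor $1-M(r)$, which vanishes at $r=1$ because $M(1)=1$, then integrates by parts twice --- the vanishing boundary terms are exactly where $M(1)=1$ and $C^2$ enter --- so the factor $\frac{1}{n^2}$ appears explicitly from the powers $r^{2n\pm1}$, and it finishes by showing the remaining normalized ratios of integrals converge to finite limits as $n\to\infty$ (a Laplace-type evaluation at $r=1$). You instead normalize by the exact Beta integrals, write $c_n=J_n/I_n=\int \widetilde M\,d\nu_n$ for probability measures $\nu_n$ concentrating at $t=1$, and Taylor-expand $\widetilde M$ to second order; the needed $O(n^{-2})$ comes from the exact moment identities ($m_n^{(1)}$ differs from $m_{n-1}^{(1)}$ by $O(n^{-2})$ and $m_n^{(2)}=O(n^{-2})$), together with $M(1)=1$ killing the zeroth-order term and $\widetilde M''$ controlling the remainder. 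I checked the cancellations you flag: they do close (in your first route $I_nK_{n-1}-I_{n-1}K_n=O(n^{-2\alpha-4})$ by the same Beta identities), so there is no gap. Two small corrections: the exact first moment is $m_n^{(1)}=-\frac{\alpha+1}{n+\alpha+2}$ rather than $-\frac{\alpha+1}{n+\alpha+1}$ (harmless, since only the difference matters), and $\widetilde M(t)=M(\sqrt t)$ need not be $C^2$ at $t=0$ (its second derivative can blow up like $t^{-3/2}$); this is easily absorbed, either by noting $\nu_n$ gives exponentially small mass to $[0,\tfrac12]$, by checking $\int_0^1 t^{n-3/2}(1-t)^{\alpha+2}dt\asymp n^{-\alpha-3}$, or by Taylor-expanding $M(r)$ at $r=1$ without the substitution. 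Comparing the two: your argument is more transparent about where the asymptotics come from (explicit moments, Watson's-lemma flavor, and it even yields the limit $b_n/a_n\to 1/M(1)$ with a rate), while the paper's integration-by-parts trick avoids any series expansion or Beta-function bookkeeping at the cost of having to justify the convergence of the ratios $C_1C_2/A$ and $C_3C_4/A$.
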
 

\begin{proof}

First, we consider the difference between elements of the sequence $\left\{\frac{b_n}{a_n}\right\}$. Here all the integrals are taken with respect to $r$ and from 0 to 1.

\begin{align*}
\frac{b_n}{a_n}-\frac{b_{n-1}}{a_{n-1}}&=\frac{\int r^{2n+1}(1-r^2)^{\alpha}}{\int r^{2n+1}\mu(r)}-\frac{\int r^{2n-1}(1-r^2)^{\alpha}}{\int r^{2n-1}\mu(r)}\\
&=\frac{\int r^{2n+1}(1-r^2)^{\alpha}\int r^{2n-1}\mu(r)-\int r^{2n-1}(1-r^2)^{\alpha}\int r^{2n+1}\mu(r)}{\int  r^{2n+1}\mu(r)\int r^{2n-1}\mu(r)}\\
&=:\frac{B(n)}{A(n)}.\\
\end{align*}
We can rewrite the numerator as
\begin{align*}
B(n)&=\int r^{2n+1}(1-r^2)^{\alpha}\int r^{2n-1}(1-r^2)\mu(r)-\int r^{2n-1}(1-r^2)^{\alpha+1}\int r^{2n+1}\mu(r)\\
&=\int r^{2n+1}\left[\left(1-M(r)\right)(1-r^2)^{\alpha}\right]\int r^{2n-1}(1-r^2)\mu(r)\\
&\hskip 2cm-\int r^{2n-1}\left[\left(1-M(r)\right)(1-r^2)^{\alpha+1}\right]\int r^{2n+1}\mu(r)\\
&=:B_1(n)-B_2(n).\\
\end{align*}

\noindent Next, we integrate $B_1(n)$ and $B_2(n)$ by parts twice to obtain
\begin{align*}
B_1(n)&=\frac{1}{(2n+2)2n}\int r^{2n+2}\left[\left(1-M(r)\right)(1-r^2)^{\alpha}\right]'\int r^{2n}\left[M(r)(1-r^2)^{\alpha+1}\right]'\\
&=:\frac{1}{(2n+2)2n}C_1(n)C_2(n)\\
&\\
B_2(n)&=\frac{1}{2n(2n+1)}\int r^{2n+1}\left[\left(1-M(r)\right)(1-r^2)^{\alpha+1}\right]''\int r^{2n+1}\left[M(r)(1-r^2)^{\alpha}\right]\\
&=:\frac{1}{2n(2n+1)}C_3(n)C_4(n).
\end{align*}
Here, $C_1,C_2,C_3,C_4$ denote the respective integrals. Note that we don't get any boundary terms after integration by parts since $M$ is of class $C^2$ on $[0,1]$ and $M(1)=1$.\\

\noindent In order to finish the proof, it suffices to show that
\begin{equation*}\sup_{n}\left\{n^2\left|\frac{B_1(n)}{A(n)}\right|\right\}~\text{ and }~\sup_{n}\left\{n^2\left|\frac{B_2(n)}{A(n)}\right|\right\}~\text{ are finite.}
\end{equation*}
Thus, it is enough to show that 
\begin{equation*}\sup_{n}\left\{\left|\frac{C_1(n)C_2(n)}{A(n)}\right|\right\}~\text{ and }~\sup_{n}\left\{\left|\frac{C_3(n)C_4(n)}{A(n)}\right|\right\}~\text{ are finite.}
\end{equation*}\\

\noindent We start with the first one.
\begin{align*}
\frac{C_1(n)C_2(n)}{A(n)}&=\frac{\int r^{2n+2}\left[\left(1-M(r)\right)(1-r^2)^{\alpha}\right]'\int r^{2n}\left[M(r)(1-r^2)^{\alpha+1}\right]'}{\int  r^{2n+1}M(r)(1-r^2)^{\alpha}\int r^{2n-1}M(r)(1-r^2)^{\alpha}}\\
&\to\frac{\left[\left(1-M(r)\right)(1-r^2)^{\alpha}\right]'\left[M(r)(1-r^2)^{\alpha+1}\right]'}{M(r)(1-r^2)^{\alpha}M(r)(1-r^2)^{\alpha}} \vert_{r=1} ~\text{ as } n\to \infty\\
&=2(\alpha+1)^2M'(1).
\end{align*}
This shows that the first supremum is indeed finite. Note that the condition $M(1)=1$ is used here.\\

\noindent We argue the same way for the second one.
\begin{align*}
\frac{C_3(n)C_4(n)}{A(n)}&=\frac{\int r^{2n+1}\left[\left(1-M(r)\right)(1-r^2)^{\alpha+1}\right]''\int r^{2n+1}\left[M(r)(1-r^2)^{\alpha}\right]}{\int  r^{2n+1}M(r)(1-r^2)^{\alpha}\int r^{2n-1}M(r)(1-r^2)^{\alpha}}\\
&\to\frac{\left[\left(1-M(r)\right)(1-r^2)^{\alpha+1}\right]''\left[M(r)(1-r^2)^{\alpha}\right]}{M(r)(1-r^2)^{\alpha}M(r)(1-r^2)^{\alpha}} \vert_{r=1} ~\text{ as } n\to \infty\\
&=2(\alpha+2)(\alpha+1)M'(1)-2(1+\alpha)\left(1-M(1)\right).
\end{align*}
This shows that the second supremum is indeed finite. Again, note that the condition $M(1)=1$ is used here. This finishes the proof Lemma \ref{multiplierestimate}.

\end{proof}

Since Lemma \ref{multiplierestimate} is established, we conclude the proof of Theorem \ref{second}.

\end{document}